\theoremstyle{plain}
\newtheorem{thm}{Theorem}
\newtheorem{cor}[thm]{Corollary}
\theoremstyle{definition}
\theoremstyle{remark}
\newcommand{\nc}{\newcommand}
\nc{\dmo}{\DeclareMathOperator}
\DeclareMathOperator{\Mod}{Mod}
\DeclareMathOperator{\Homeo}{Homeo}
\nc{\para}[1]{\medskip\noindent\textbf{#1.}}
\title{Vanishing of the Euler class in Power subgroups of the punctured mapping class group}
\author{Lei Chen}
\address{\newline Department of Mathematics   \newline California Institute of Technology   \newline Pasadena, CA 91125,  USA }
\email{chenlei@caltech.edu}
\begin{document}
 \bibliographystyle{alpha}
 
\maketitle

\begin{abstract}
In this paper, we use a result of Dahmani to show that the Euler class of some power subgroup (the subgroup normally generated by a fixed power of Dehn twist about a non-separating curve) is trivial inside the mapping class group of once punctured surface. 
\end{abstract}
Let $S=S_{g,p}^b$ be a surface of genus $g$ with $p$ marked points and $b$ boundary components. We omit $p$ or $b$ whenever it is $0$. Let $\Mod(S_{g,p}^b)$ be the mapping class group of $S_{g,p}^b$; i.e., the group of connected components of homeomorphism groups of $S_{g,p}^b$ fixing $p$ marked points as a set and $b$ boundary components pointwise. 

Let  $P(S)(n)$ be the normal closure of $T_d^n$ in $\Mod(S)$ for a non-separating curve $d$ on $S$. For $\Mod(S_{g,1})$, there is an Euler class $E\in H^2(\Mod(S_{g,1});\mathbb{Z})$, which is the pullback of the Euler class in $H^2(\Homeo_+(S^1);\mathbb{Z})$ coming from the well-known action of $\Mod(S_{g,1})\to \Homeo_+(S^1)$ constructed by Nielsen. 
 
In this paper, we show the following.
\begin{thm}[The vanishing of the Euler class]\label{main}
For $g>1$, there exists $N_0$, such that for any multiple $N$ of $N_0$, we have that the Euler class vanishes in the $P(S_{g,1})(N)$.
\end{thm}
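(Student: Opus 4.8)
The plan is to reduce the vanishing of the restricted Euler class to a statement about the cohomological dimension of the subgroup $P(S_{g,1})(N)$, using Dahmani's structural description of normal closures of high powers of Dehn twists as the essential input. The class $E$ restricts along the inclusion $P(S_{g,1})(N)\hookrightarrow \Mod(S_{g,1})$ to a class in $H^2(P(S_{g,1})(N);\mathbb{Z})$, and the assertion of the theorem is exactly that this restricted class is $0$. Thus it suffices to prove that $H^2(P(S_{g,1})(N);\mathbb{Z})=0$, for which in turn it suffices to show that $P(S_{g,1})(N)$ is a free group: a free group admits a one-dimensional classifying space (a wedge of circles), hence has cohomological dimension at most $1$, so all of its higher cohomology vanishes with any coefficients.

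The heart of the argument is therefore to establish that the normal closure $P(S_{g,1})(N)=\langle\!\langle T_d^N\rangle\!\rangle$ is free once $N$ is a sufficiently divisible multiple of a fixed $N_0$. This is where Dahmani's theorem enters. The cyclic group $\langle T_d\rangle$ generated by the twist about the non-separating curve $d$ is well behaved with respect to a suitable hyperbolic action of $\Mod(S_{g,1})$, and for large powers the conjugates of $\langle T_d^N\rangle$ form a very rotating family. Dahmani's conclusion is then that the normal closure is the free product of these conjugate cyclic groups, $P(S_{g,1})(N)\cong \ast_i\,\langle T_d^N\rangle_i$, which is an (infinitely generated) free group. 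The threshold $N_0$ is the constant produced by the rotating-family machinery, and passing to a multiple of $N_0$ only increases the rotation, so the very rotating hypothesis — and hence freeness — persists; this is why the statement quantifies over multiples of $N_0$.

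The main obstacle, and the place where care is genuinely required, is not the cohomological step but the verification that Dahmani's hypotheses apply in the once-punctured setting: one must identify the correct space on which $\Mod(S_{g,1})$ acts so that $T_d$ plays the role of an element generating a rotating family. The twist $T_d$ is not pseudo-Anosov, hence not loxodromic on the curve complex, so one instead works with the annular or subsurface structure around $d$; one must also check that the genus restriction $g>1$ supplies enough complexity for the relevant complex to be hyperbolic of infinite diameter and for the rotating-family conditions to hold. Once these geometric inputs are in place, the freeness of $P(S_{g,1})(N)$, and therefore the vanishing of the restricted Euler class, follows formally. I would note the resulting contrast: although $E$ is nontrivial in $H^2(\Mod(S_{g,1});\mathbb{Z})$, its restriction to the free subgroup $P(S_{g,1})(N)$ must die simply because the target group is zero — equivalently, the restricted circle action $P(S_{g,1})(N)\to\Homeo_+(S^1)$ lifts, a free group presenting no obstruction to such a lift.
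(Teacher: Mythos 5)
Your reduction is logically sound (if $P(S_{g,1})(N)$ were free, its $H^2$ would vanish and the restricted Euler class with it), but the key claim it rests on is false: $P(S_{g,1})(N)$ is \emph{never} free. For $g>1$ choose two disjoint, non-isotopic, non-separating curves $c$ and $d$ on $S_{g,1}$. Since $\Mod(S_{g,1})$ acts transitively on non-separating curves, $T_c^N$ is conjugate to $T_d^N$, so both lie in the normal closure $P(S_{g,1})(N)$; they commute and generate a subgroup isomorphic to $\mathbb{Z}^2$. A free group contains no $\mathbb{Z}^2$, so no choice of $N_0$ or multiple $N$ can make $P(S_{g,1})(N)$ free. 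The rotating-family freeness theorem you invoke (Dahmani--Guirardel--Osin) applies to normal closures of high powers of loxodromic WPD elements, e.g.\ pseudo-Anosovs; you correctly flag that $T_d$ is not of this type, but the fix is not to find a better space on which $T_d$ becomes loxodromic --- the commuting conjugates above are an absolute obstruction to freeness. What Dahmani actually proves for Dehn twists is weaker and precisely reflects this: for suitable $N$, the group $P(S_{g,1})(N)$ has a presentation with generators $\{T_d^N \mid d \text{ non-separating}\}$ whose relations are exactly the commutation relations (for disjoint curves) and the conjugation relations $T_d^N T_c^N T_d^{-N}=T_{T_d^N(c)}^N$. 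So the cohomological-dimension route is unavailable, and the step $H^2(P(S_{g,1})(N);\mathbb{Z})=0$ is unjustified.

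The paper's proof uses Dahmani's presentation in a different way: it builds the lift of $\phi\colon P(S_{g,1})(N)\to \Homeo_+(S^1)$ to $\widetilde{\Homeo}_+(S^1)$ by hand. Each generator $\phi(T_d^N)$ fixes points of $S^1$ (it is the identity on the limit set of a lift of $S_g - d$), hence has rotation number zero mod $1$, and one defines $\tilde{\phi}(T_d^N)$ to be the unique lift with rotation number zero. One then checks that both families of relations in Dahmani's presentation are preserved, using that rotation number is additive on commuting elements of $\widetilde{\Homeo}_+(S^1)$ and is a conjugacy invariant. The resulting homomorphism $\tilde{\phi}$ splits the pulled-back central extension, which is exactly the vanishing of the Euler class on $P(S_{g,1})(N)$. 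In short: the correct use of Dahmani's theorem is through its explicit presentation, relations included, not through a freeness statement; your last sentence (``a free group presenting no obstruction to such a lift'') would be fine if the group were free, but it is not.
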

\begin{proof}
As discussed in \cite[Chapter 5.5]{FM}, the Euler class of
 $\Homeo_+(S^1)$ is the Euler class of the following central extension
\begin{equation}\label{1}
1\to \mathbb{Z}\to \widetilde{\Homeo}_+(S^1)\to \Homeo_+(S^1)\to 1,
\end{equation}
where $\widetilde{\Homeo}_+(S^1)$ consists of lifts of $\Homeo_+(S^1)$ through the universal cover $\mathbb{R}\to S^1$. 
The Euler class serves as an obstruction for the non-splitting  of \eqref{1}.

To show that the Euler class in $P(S_{g,1})(N)$ vanishes, we only need to show that the embedding $\phi: P(S_{g,1})(N)\to \Mod(S_{g,1})\to \Homeo_+(S^1)$ has a lift to $\widetilde{\Homeo}_+(S^1)$.

\vskip .3cm
We now construct the lift explicitly by Dahmani's result.

The rotation number of $f\in \widetilde{\Homeo}_+(S^1)$ is defined as the limit of $\frac{f^n(x)}{n}$ as $n\to \infty$ for any $x\in \mathbb{R}$. It does not depend on the choice of $x$ and for $g\in \Homeo_+(S^1)$, the rotation number is defined as the modulo $1$ of the rotation number of any lift of $g$ in $\widetilde{\Homeo}_+(S^1)$. We claim that the image $\phi(T_d^n)\in \Homeo_+(S^1)$ has zero rotation number mod $1$.

 The reason is that $\phi(T_d^n)$ is obtained as the restriction to the boundary of $\mathbb{H}^2$ of the lift of $T_d^n$ to the universal cover $\pi: \mathbb{H}^2\to S_{g}$ fixing a point. We know that the lift of $T_d^n$ to the universal cover is identity on one component $C$ of the preimage of $S_{g}-d$ under $\pi$. This means that $\phi(T_d^n)$ is identity on the limit points of $C$, which is a cantor set. Since $\phi(T_d^n)$ has a fixed point, we know that the rotation number of $\phi(T_d^n)$ is zero mod $1$.

We choose the image of $\tilde{\phi}(T_d^n)$ to be the unique lift in  $\widetilde{\Homeo}_+(S^1)$ such that the rotation number is zero. We claim that such map $\tilde{\phi}$ induces a homomorphism
 $P(S_{g,1})(N)\to \widetilde{\Homeo}_+(S^1)$ as a lift of $\phi$. We only need to check that relations are preserved. 
 
W will use the result of Dahmani \cite{Dahmani} about deep relations. He proved that there exists $N_0$ such that for any multiple $N$ of $N_0$, the subgroup $P(S_{g,1})(N)$ of $\Mod(S_{g,1})$ has a presentation with the generating set 
\[
\{T_d^n| \text{ $d$ non separating curve}\}, \] 
and relations only ``commutating relations" and ``conjugation relations".

There are only two kinds of relations by Dahmani's result, where we check each one.
\begin{itemize}
\item Commutating relations: $T_d^n,T_c^n$ commute if and only if $c,d$ disjoint. Rotation numbers are additive for commuting elements. Therefore commutating relations hold under $\tilde{\phi}$ because both $\tilde{\phi}(T_d^nT_c^n)$ and $\tilde{\phi}(T_c^nT_d^n)$ have zero rotation numbers.
 \item Conjugation relations: $T_d^nT_c^nT_d^{-n}=T_{T_d(c)}^n$. Rotation number is a conjugacy invariant.
Therefore conjugation relations hold under $\tilde{\phi}$ because both $\tilde{\phi}(T_d^nT_c^nT_d^{-n})$ and $\tilde{\phi}(T_{T_d(c)}^n)$ have zero rotation numbers. \end{itemize}

The construction shows that $\phi$ has a lift $\tilde{\phi}$, which implies that the Euler class is trivial on $P(S_{g,1})(N)$.
\end{proof}
\begin{cor}
For $g>1$, there exists $N_0$, such that for any multiple $N$ of $N_0$, we have that in $\Mod(S_g^1)$, no power of the Dehn twist of the boundary curve lies in $P(S_{g}^1)(N)$.
\end{cor}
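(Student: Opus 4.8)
The plan is to deduce the corollary from the theorem by moving between the once-marked surface $S_{g,1}$ and the bounded surface $S_g^1$ via the capping homomorphism. Capping the boundary of $S_g^1$ with a once-marked disk produces a surjection $c:\Mod(S_g^1)\to\Mod(S_{g,1})$ whose kernel is the infinite cyclic group generated by the boundary twist $T_\partial$, giving a central extension
\[
1\to \mathbb{Z}\langle T_\partial\rangle \to \Mod(S_g^1)\xrightarrow{c}\Mod(S_{g,1})\to 1.
\]
The first input I would record is the classical fact that this extension is exactly the pullback along the Nielsen action $\phi$ of the universal central extension \eqref{1}; equivalently, the Nielsen action lifts to a homomorphism $\Phi:\Mod(S_g^1)\to\widetilde{\Homeo}_+(S^1)$ covering $\phi\circ c$ and sending $T_\partial$ to the generator $\tau\colon x\mapsto x+1$ of the central $\mathbb{Z}$. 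In particular $\Phi(T_\partial^k)=\tau^k$, which is nontrivial whenever $k\neq 0$.

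Next I would observe that $c$ carries $P(S_g^1)(N)$ onto $P(S_{g,1})(N)$: since $c$ is surjective and $c(T_d^N)=T_{c(d)}^N$ with $c(d)$ still non-separating, the image of a normal closure is the normal closure of the image. Moreover, because $\Mod(S_g^1)$ acts transitively on non-separating simple closed curves, $P(S_g^1)(N)$ is generated by the elements $T_e^N$ as $e$ ranges over non-separating curves, matching exactly the generating set used for $P(S_{g,1})(N)$ in the theorem.

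The heart of the argument is the comparison of two homomorphisms $P(S_g^1)(N)\to\widetilde{\Homeo}_+(S^1)$, namely the restriction of $\Phi$ and the composite $\tilde\phi\circ c$, where $\tilde\phi$ is the lift built in the theorem. Both cover $\phi\circ c$, so it suffices to check that they agree on the generators $T_e^N$. Here I would reprise the geometric computation from the theorem's proof: the lift of $T_e^N$ to $\mathbb{H}^2$ is the identity on a complementary component $C$ of the preimage of $S_g-e$, so $\Phi(T_e^N)$ fixes the limit Cantor set of $C$ pointwise and in particular has a fixed point in $\mathbb{R}$; hence its rotation number is $0$. Since $\tilde\phi(c(T_e^N))=\tilde\phi(T_{c(e)}^N)$ is by construction the unique lift of $\phi(T_{c(e)}^N)$ with rotation number $0$, the two homomorphisms agree on each generator, and therefore $\Phi|_{P(S_g^1)(N)}=\tilde\phi\circ c$.

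Finally I would conclude by contradiction. If some nonzero power $T_\partial^k$ lay in $P(S_g^1)(N)$, then the identity just established gives $\Phi(T_\partial^k)=\tilde\phi\bigl(c(T_\partial^k)\bigr)=\tilde\phi(\mathrm{id})=\mathrm{id}$; but $\Phi(T_\partial^k)=\tau^k$, which equals the identity only when $k=0$, a contradiction. The main obstacle I anticipate is not this final bookkeeping but the geometric input of the third paragraph: making precise that the single lift $\Phi$ sends $T_\partial$ to the central generator $\tau$ while sending powers of non-separating twists to rotation-number-zero elements—that is, reconciling the normalization realizing the capping extension as the pullback of \eqref{1} with the explicit fixed-point description used to compute rotation numbers. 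This is precisely where the geometry underlying the theorem's proof must be reused rather than merely quoted.
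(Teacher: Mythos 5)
Your proposal is correct and takes essentially the same route as the paper: both identify the capping extension $1\to\mathbb{Z}\langle T_\partial\rangle\to\Mod(S_g^1)\xrightarrow{F}\Mod(S_{g,1})\to 1$ with the pullback of \eqref{1}, note that $F$ carries $P(S_g^1)(N)$ onto $P(S_{g,1})(N)$ with matching generators, and use the lift $\tilde\phi$ from Theorem \ref{main} to conclude that the central $\mathbb{Z}$ meets $P(S_g^1)(N)$ trivially. The paper packages your final contradiction as ``$\tilde\phi$ is an inverse of $F$, so $F|_{P(S_g^1)(N)}$ is an isomorphism,'' which is the same statement; the one point you rightly flag as delicate --- that the natural map $\Mod(S_g^1)\to\widetilde{\Homeo}_+(S^1)$ sends each $T_e^N$ to the rotation-number-zero lift --- is exactly the step the paper's terse proof glosses over, so your explicit treatment of it is a faithful (indeed more careful) rendering of the same argument.
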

\begin{proof}
As discussed in \cite[Chapter 5.5.6]{FM}, the Euler class of $\Mod(S_{g,1})$ is the associated Euler class of the following central extension
\[
1\to \mathbb{Z}\to \Mod(S_{g}^1)\xrightarrow{F} \Mod(S_{g,1})\to 1.
\]
The image of  $P(S_{g}^1)(N)$ under $F$ is $P(S_{g,1})(N)$ with the same generators. The lift we construct in the proof of Theorem \ref{main} is an inverse of $F:P(S_{g}^1)(N)\to P(S_{g,1})(N)$, which is also a surjection. Therefore, we know that $F$ is an isomorphism, which proves this corollary.
\end{proof}
Theorem \ref{main} also implies a special case of \cite[Corollary 1.2]{Funar}.
\begin{cor}
For $g>1$, there exists $N_0$, such that for any multiple $N$ of $N_0$, we have that $P(S_{g,1})(N)<\Mod(S_{g,1})$ has infinite index.
\end{cor}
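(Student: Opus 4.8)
The plan is to argue by contradiction, combining Theorem \ref{main} with the transfer homomorphism for finite-index subgroups. The key external input I need is that the Euler class $E \in H^2(\Mod(S_{g,1});\mathbb{Z})$ has infinite order; granting this, Theorem \ref{main} says $E$ restricts to zero on $P(S_{g,1})(N)$, and a finite-index subgroup cannot annihilate an infinite-order integral cohomology class.

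First I would record that $E$ has infinite order, and the cleanest route is to restrict the Nielsen action along the point-pushing subgroup. The Birman exact sequence identifies $\pi_1(S_g)$ with the point-pushing subgroup of $\Mod(S_{g,1})$, and under this identification the composite $\pi_1(S_g) \to \Mod(S_{g,1}) \to \Homeo_+(S^1)$ is conjugate to the Fuchsian boundary action of $\pi_1(S_g)$ on $\partial \mathbb{H}^2 = S^1$ by deck transformations (point-pushing along $\gamma$ induces the inner automorphism $c_\gamma$, which on the Gromov boundary is exactly left translation by $\gamma$). The Euler number of the Fuchsian action realizes the Milnor--Wood bound, so it equals $\pm\chi(S_g) = \pm(2-2g) \neq 0$ for $g>1$. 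Hence the image of $E$ under the restriction $H^2(\Mod(S_{g,1});\mathbb{Z}) \to H^2(\pi_1(S_g);\mathbb{Z}) \cong \mathbb{Z}$ is the nonzero integer $\pm(2-2g)$, so in particular $E$ has infinite order in $H^2(\Mod(S_{g,1});\mathbb{Z})$.

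Next, suppose for contradiction that $P(S_{g,1})(N)$ has finite index $m$ in $\Mod(S_{g,1})$. For a finite-index inclusion $H < G$ there is a transfer (corestriction) map $\mathrm{tr} \colon H^2(H;\mathbb{Z}) \to H^2(G;\mathbb{Z})$ satisfying $\mathrm{tr} \circ \mathrm{res}^G_H = m \cdot \mathrm{id}$ on $H^2(G;\mathbb{Z})$. Applying this with $G = \Mod(S_{g,1})$ and $H = P(S_{g,1})(N)$, and using that $\mathrm{res}(E) = 0$ by Theorem \ref{main}, I would obtain $m \cdot E = \mathrm{tr}(\mathrm{res}(E)) = 0$. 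This contradicts the infinite order of $E$ established above, so $P(S_{g,1})(N)$ must have infinite index in $\Mod(S_{g,1})$.

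The only substantive step is the nonvanishing of $E$, and that is where I expect the main obstacle to lie: everything else is the formal transfer argument. The subtlety is not that the point-pushing subgroup carries some surface-group action on the circle, but that the restricted Nielsen action is \emph{genuinely} the Fuchsian boundary action, which is precisely what pins the Euler number down to $\chi(S_g)$ rather than merely to something in the Milnor--Wood range. Once that identification is made carefully, the infinite order of $E$, and hence infinite index of $P(S_{g,1})(N)$, follows immediately.
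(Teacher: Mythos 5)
Your proposal is correct and follows essentially the same route as the paper: the paper's proof is exactly "the Euler class is rationally nontrivial, hence nontrivial on any finite-index subgroup (transfer), contradicting Theorem \ref{main}." The only difference is that you additionally supply a proof of the nonvanishing — restricting the Nielsen action to the point-pushing subgroup and identifying it with the Fuchsian boundary action of $\pi_1(S_g)$, with Euler number $\pm\chi(S_g)$ — whereas the paper simply asserts rational nontriviality as known; your justification of that step is sound.
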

\begin{proof}
The Euler class is nontrivial rationally in $H^2(\Mod(S_{g,1});\mathbb{Q})$, which means that it is nontrivial in any finite index subgroup of $\Mod(S_{g,1})$. Then Theorem \ref{main} implies this corollary.
\end{proof}

\para{Acknowledgment} The author would like  to thank Fran\c cois Dahmani and Vlad Markovic for very useful discussions.

\bibliography{citing}{}

\end{document}